\documentclass{amsart}
\pdfoutput=1

\usepackage{amssymb}
\usepackage{booktabs}
\usepackage{color}
\usepackage[shortlabels]{enumitem}
\usepackage{graphicx}
\usepackage{mathtools}
\usepackage{thmtools}
\usepackage{url}

\usepackage[utf8]{inputenc}
\usepackage[T1]{fontenc}

\usepackage[pagebackref]{hyperref} 
\usepackage[all]{hypcap} 



\definecolor{greenish}{rgb}{0.0, 0.7, 0.0}
\definecolor{purplish}{rgb}{0.5, 0.0, 0.8}

\hypersetup{
  breaklinks,colorlinks,citecolor=greenish,linkcolor=purplish, 
  pdftitle={A modular relation for the chromatic symmetric functions of (3+1)-free posets},
  pdfauthor={Mathieu Guay-Paquet},
}


\graphicspath{{graphics/}}


\newcommand{\PP}{\mathbb{P}}

\newcommand{\QQ}{\mathbb{Q}}

\newcommand{\abs}[1]{\left|{#1}\right|}

\newcommand{\shortmeq}{\mathrel{\sim_{\text{M}}}}
\newcommand{\shortpeq}{\mathrel{\sim_{\text{P}}}}
\newcommand{\tallmeq}{\mathrel{\underset{\text{M}}{\scalebox{2}[1]{$\sim$}}}}
\newcommand{\tallpeq}{\mathrel{\underset{\text{P}}{\scalebox{2}[1]{$\sim$}}}}
\newcommand{\meq}{\mathchoice{\tallmeq}{\shortmeq}{\shortmeq}{\shortmeq}}
\newcommand{\peq}{\mathchoice{\tallpeq}{\shortpeq}{\shortpeq}{\shortpeq}}

\newcommand{\graphic}[1]{\left(\vcenter{\hbox{\includegraphics[scale=.6, trim=-2pt -2pt -2pt -2pt]{#1}}}\right)}

\DeclareMathOperator{\csf}{CSF}


\declaretheorem[numberwithin=section]{theorem}

\declaretheorem[numberlike=theorem]{proposition}

\declaretheorem[numberlike=theorem, style=definition]{remark}
\declaretheorem[numberlike=theorem, style=definition]{example}

\numberwithin{equation}{section} 


\begin{document}

\title[A modular law for the poset CSF]{A modular law for the chromatic \\ symmetric functions of $(3+1)$-free posets}

\author{Mathieu Guay-Paquet}
\address{
LaCIM \\
Universit\'e du Qu\'ebec \`a Montr\'eal \\
201 Pr\'esident-Kennedy \\
Montr\'eal QC\ \ H2X~3Y7 \\
Canada}
\thanks{This research was supported by an NSERC Postdoctoral Fellowship} 
\email{mathieu.guaypaquet@lacim.ca}

\begin{abstract}
We consider a linear relation which expresses Stanley's chromatic symmetric function for a poset in terms of the chromatic symmetric functions   of some closely related posets, which we call the modular law. By applying this in the context of $(3+1)$-free posets, we are able to reduce Stanley and Stembridge's conjecture that the chromatic symmetric functions of all $(3+1)$-free posets are $e$-positive to the case of $(3+1)$-and-$(2+2)$-free posets, also known as unit interval orders. In fact, our reduction can be pushed further to a much smaller class of posets, for which we have no satisfying characterization. We also obtain a new proof of the fact that all 3-free posets have $e$-positive chromatic symmetric functions.
\end{abstract}

\maketitle

\section{Introduction}\label{sec:intro}

In~\cite{stanley95}, Stanley generalized the familiar notion of chromatic polynomials for (finite) graphs in two directions to obtain the notion of chromatic \emph{symmetric functions}, defined for either graphs or \emph{posets}. Instead of counting how many proper vertex colourings there are for a given number of colours, the chromatic symmetric function is a generating function for all proper vertex colourings which keeps track of the number of vertices in each colour class. Thus, for a graph $G$ with vertex set $V$, the chromatic symmetric function is
\begin{align*}
  \csf(G)
    &= \sum_{\mathclap{\substack{\text{proper} \\ \kappa \colon V \to \PP}}} \mathbf{x}_\kappa \\
    &= \sum_{\mathclap{\substack{\text{proper} \\ \kappa \colon V \to \PP}}} x_1^{\#\kappa^{-1}(1)} x_2^{\#\kappa^{-1}(2)} x_3^{\#\kappa^{-1}(3)} \cdots,
\end{align*}
where $\PP = \{1, 2, 3, \ldots\}$ and $\mathbf{x} = (x_1, x_2, x_3, \ldots)$ is a countable set of indeterminates. For graphs, a colouring is proper if each colour class is an independent set. For posets, a colouring is proper if each colour class is a chain. Equivalently, $\csf(P)$ for a poset $P$ is the same as $\csf(G)$ for the incomparability graph $G$ of $P$.

The set of colours $\PP$ can be freely permuted without affecting the definition of $\csf(P)$, so the chromatic symmetric function is in fact a symmetric function. We may ask, as Stanley did, about its expansion in the classical bases for the ring of symmetric functions. In the basis of monomial symmetric functions, the coefficient of $m_\lambda$ in $\csf(P)$ is simply the number of proper colourings of $P$ where $\lambda_i$ vertices have colour $i$ for each $i \in \PP$. In particular, all the coefficients in this basis are nonnegative, so we say that $\csf(P)$ is \emph{$m$-positive} for all posets $P$.

In the basis of power sum symmetric functions, the coefficient of $p_\lambda$ in $\csf(P)$ can be obtained a mobius inversion argument (see~\cite[Theorem~2.6]{stanley95}), and it is not positive in general. However, its sign is predictable, and in fact the coefficient of $p_\lambda$ in $\omega(\csf(P))$ is always nonnegative, where $\omega$ is the fundamental involution on symmetric functions, which sends $p_k$ to $(-1)^{k-1} p_k$. Thus, we may say that $\csf(P)$ is $\omega(p)$-positive for all $P$.

For the basis of elementary symmetric functions, the situation is more complicated. The coefficient of $e_\lambda$ in $\csf(P)$ is not positive in general, and it does not appear to have a predictable sign. However, Stanley and Stembridge~\cite{stanley95, stanley-stembridge93} have identified a large class of posets $P$ which appear to be $e$-positive. Consider the $(3+1)$ poset, which consists of the disjoint union of a chain of length 3 and a chain of length 1. This is the smallest poset which is \emph{not} $e$-positive, and as Stanley and Stembridge verified, all posets on up to 8 vertices which do not contain an induced copy of the $(3+1)$ poset have $e$-positive chromatic symmetric functions. Thus, the conjecture is that all $(3+1)$-free posets are $e$-positive.

Note that, when considering the $b$-positivity of $(3+1)$-free posets for various bases $b$, the basis of elementary symmetric functions is best-possible, in the sense that the only $e$-positive bases $b$ for which $(3+1)$-posets are $b$-positive are positive scalings of the $e$ basis. Indeed, consider the graded poset $P_\lambda$ with $\lambda_i$ vertices of rank $i$ for each $i$, and where every vertex of rank $i$ is less than every vertex of rank $i + 1$. Then, $P_\lambda$ is $(3+1)$-free, and from the definition above, we have $\csf(P_\lambda) = e_\lambda \cdot \prod_i \lambda_i!$, a scalar multiple of a single $e_\lambda$.

In the other direction, the $e$ basis is positive in the basis of Schur symmetric functions $s_\lambda$, so $e$-positivity implies $s$-positivity. Gasharov~\cite{gasharov96} proved that the chromatic symmetric functions of all $(3+1)$-free posets are $s$-positive, which gives more evidence for the $e$-positivity conjecture.

\begin{table}[b]
  \begin{tabular*}{\textwidth}{l@{\hspace{3em}}@{\extracolsep{\fill}}*{7}{r}}
    \toprule
    Number of vertices &
    1 & 2 & 3 & 4 & 5 & 6 & 7 \\
    \midrule
    All posets &
    1 & 2 & 5 & 16 & 63 & 318 & 2045 \\
    \dots$(3+1)$-free &
    1 & 2 & 5 & 15 & 49 & 173 & 639 \\
    \dots{}and $(2+2)$-free &
    1 & 2 & 5 & 14 & 42 & 132 & 429 \\
    \dots{}and basic &
    1 & 1 & 1 & 1 & 1 & 1 & 2 \\
  \end{tabular*}
  \begin{tabular*}{\textwidth}{l@{\hspace{3em}}@{\extracolsep{\fill}}*{4}{r}}
    \midrule
    Number of vertices &
    8 & 9 & 10 & 20 \\
    \midrule
    All posets &
    16999 & 183231 & 2567284 & unknown \\
    \dots$(3+1)$-free &
    2469 & 9997 & 43109 & 219364550983697100 \\
    \dots{}and $(2+2)$-free &
    1430 & 4862 & 16796 & 6564120420 \\
    \dots{}and basic &
    2 & 5 & 11 & 35635 \\
    \bottomrule
  \end{tabular*}
  \smallskip
  \caption{Numbers of posets with a given number of vertices in various classes of posets, up to isomorphism: all posets~\cite[\href{http://oeis.org/A000112}{A000112}]{sloane}; those which avoid $(3+1)$~\cite[\href{http://oeis.org/A079146}{A079146}]{sloane}; those which additionally avoid $(2+2)$~\cite[\href{http://oeis.org/A000108}{A000108}]{sloane}; those which additionally satisfy the restrictions described in \autoref{rem:subclass}.}
  \label{tab:numbers}
\end{table}


In this paper, make further progress towards the $e$-positivity conjecture, by showing that for every $(3+1)$-free poset $P$, its chromatic symmetric function $\csf(P)$ is a convex combination of the chromatic symmetric functions
\[
  \{\,\csf(P') : \text{$P'$ is $(3+1)$-free and $(2+2)$-free}\,\}.
\]
Thus, we reduce the $e$-positivity conjecture for $(3+1)$-free posets to the subclass of $(3+1)$-and-$(2+2)$-free posets, which are much more structured: these are the \emph{unit interval orders}, and they are counted by the Catalan numbers~\cite{scott-suppes58}. Note that these chromatic symmetric functions are also the subject of a recent conjecture of Shareshian and Wachs~\cite[Conjecture~5.3]{shareshian-wachs11}, which relates them to Tymoczko's~\cite{tymoczko08} representations of the symmetric groups on the equivariant cohomology of Hessenberg varieties.

In fact, our methods reduce the problem to a much smaller class of posets (see \autoref{tab:numbers} and \autoref{rem:subclass}), for which we do not currently have a satisfying characterization. However, this reduction has allowed us to computationally verify the conjecture for all posets with up to 20 vertices, using modern computer hardware, up from the previously known 8.

We also obtain a new proof of the fact that all 3-free posets (that is, posets where every vertex is either a minimal element or a maximal element) are $e$-positive~\cite[Corollary~3.6]{stanley95}.

The proofs rely on a recent structural characterization~\cite{guay-paquet-morales-rowland13} of $(3+1)$-free posets, and on a new relation which expresses the $\csf$ of a poset in terms of the $\csf$s of slightly modified versions of this poset, which we call the \emph{modular law}.

\section{Part listings}\label{sec:posets}

In order to state the modular law, we will need a representation for $(3+1)$-free posets that we can manipulate. A convenient representation for these posets is by \emph{part listings}, which we define below.

\begin{figure}[h]
  \centering
  \null
  \hfill
  \raisebox{-.5\height}{\includegraphics{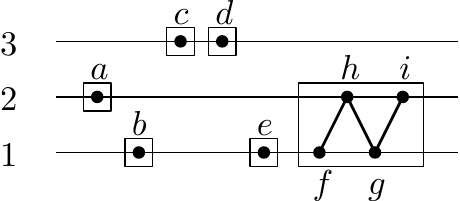}}
  \hfill
  \raisebox{-.5\height}{\includegraphics{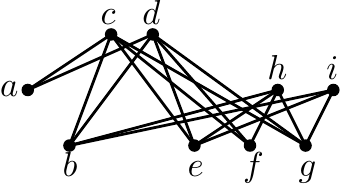}}
  \hfill
  \null
  \caption{On the left: a graphical representation of a part listing. On the right: the Hasse diagram of the corresponding poset.}
  \label{fig:part-listing}
\end{figure}

A \emph{part listing} is an ordered list of \emph{parts}, which are arranged on positive integer \emph{levels}. Each part is either a single vertex at a given level, or a bicoloured graph where the colour classes correspond to two adjacent levels and all edges join vertices on distinct levels.

\begin{example}
  \autoref{fig:part-listing} shows a part listing which consists of 6 parts spanning 3 levels, arranged left to right. The first part is the vertex $a$ on level 2; the second is vertex $b$ on level 1; the third is vertex $c$ on level 3; the fourth is vertex $d$ on level 3; the fifth is vertex $e$ on level 1; and the sixth is the bicoloured graph $G$ with vertices $\{f, g\}$ down on level 1, vertices $\{h, i\}$ up on level 2, and edges $\{fh, gh, gi\}$.
\end{example}

A part listing can be given as a word over the alphabet
\[
  \Sigma = \{\,v_i : i \in \PP\,\} \cup \{\,b_{i,i+1}(G) : \text{$i \in \PP$, $G$ a bicoloured graph}\,\},
\]
where the symbol $v_i$ corresponds to a vertex on level $i$, and the symbol $b_{i,i+1}(G)$ corresponds to a copy of the bicoloured graph $G$ on levels $i$ and $i+1$.

\begin{example}
  The part listing from \autoref{fig:part-listing} can be given as a word over the alphabet $\Sigma$ by $v_2 v_1 v_3 v_3 v_1 b_{12}(G)$, where $G$ is the bicoloured graph with vertices $\{f, g\}$ coloured `down', vertices $\{h, i\}$ coloured `up', and edges $\{fh, gh, gi\}$.
\end{example}

Given a part listing $L$, we can define an associated poset on its vertex set as follows. If $x$ and $y$ are vertices in $L$, then let $x < y$ if
\begin{enumerate}[(i),nosep]
  \item\label{item:two-levels} $x$ is at least two levels below $y$; or
  \item\label{item:one-level} $x$ is exactly one level below $y$, and the part containing $x$ appears strictly before the part containing $y$ in $L$; or
  \item\label{item:graph} $x$ is exactly one level below $y$, and they are joined by a bicoloured graph edge.
\end{enumerate}

\begin{example}
  \autoref{fig:part-listing} shows a part listing on the left and the associated poset on the right. The vertices $\{b, e, f, g\}$ are less than the vertices $\{c, d\}$ by condition~\ref*{item:two-levels}. The vertex $a$ is less than the vertices $\{c, d\}$, and the vertices $\{b, e\}$ are less than the vertices $\{h, i\}$ by condition~\ref*{item:one-level}. The relations $f < h$, $g < h$, and $g < i$ are given by condition~\ref*{item:graph}.
\end{example}

Note that this construction does yield a poset; indeed, $x < y$ implies that $x$ is on a level strictly below $y$, which guarantees anti-symmetry, and condition~\ref*{item:two-levels} guarantees transitivity. The following two propositions justify the claim that part listings are a suitable representation for posets which are $(3+1)$-free.

\begin{proposition}
  Given any part listing $L$, the associated poset $P$ is $(3+1)$-free.
\end{proposition}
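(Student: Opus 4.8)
The plan is to argue by contradiction. Suppose the poset $P$ associated to $L$ contains an induced $(3+1)$: a chain $x_1 < x_2 < x_3$ together with a vertex $w$ that is incomparable to each of $x_1$, $x_2$, $x_3$. I will show this is impossible, first by pinning down the levels of all four vertices, and then by deriving contradictory constraints on the positions of the parts containing them in the list $L$.

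For the first step, note that each of conditions~\ref*{item:two-levels}--\ref*{item:graph} forces the smaller vertex to lie strictly below the larger one, so $x_1 < x_2 < x_3$ gives that the level of $x_1$ is strictly below that of $x_2$, which is strictly below that of $x_3$; in particular $x_3$ is at least two levels above $x_1$. By condition~\ref*{item:two-levels}, any two vertices at least two levels apart are comparable, so incomparability of $w$ with $x_1$ forces $w$ to be within one level of $x_1$, and incomparability of $w$ with $x_3$ forces $w$ to be within one level of $x_3$. Combined with the fact that $x_3$ is at least two levels above $x_1$, this is only possible if $x_3$ is exactly two levels above $x_1$, and then $x_2$ and $w$ both lie on the single intermediate level.

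For the second step, I extract position information from conditions~\ref*{item:one-level} and~\ref*{item:graph}. Since $w$ lies exactly one level above $x_1$ and $x_1 \not< w$, neither does the part of $x_1$ precede the part of $w$ in $L$ nor is there a bicoloured edge between them; symmetrically, since $w$ lies exactly one level below $x_3$ and $w \not< x_3$, the part of $x_3$ does not precede the part of $w$. Chaining these two, the part containing $x_3$ occurs at or before the part containing $x_1$ in $L$. On the other hand, $x_1 < x_2$ and $x_2 < x_3$ with consecutive levels, so each relation is witnessed either by condition~\ref*{item:one-level} (the lower part strictly precedes the higher part) or by condition~\ref*{item:graph} (both vertices lie in one bicoloured-graph part). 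A bicoloured-graph part occupies exactly two adjacent levels, so $x_1$, $x_2$, $x_3$ cannot all lie in a common part; hence at least one of the two relations is witnessed by condition~\ref*{item:one-level}, forcing the part of $x_1$ to strictly precede the part of $x_3$. This contradicts the previous conclusion, so no induced $(3+1)$ exists.

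The one place where care is needed — and the part I would write out in full detail — is the handling of the negated conditions in the second step: two vertices on adjacent levels inside one bicoloured-graph part need \emph{not} be comparable in $P$ (the graph need not be complete bipartite), so "$x_1 \not< w$" must be unpacked as "the part of $x_1$ does not \emph{strictly} precede the part of $w$, \emph{and} no bicoloured edge joins them", and similarly for $w \not< x_3$. Everything else is a direct deduction from the definitions, and the level-counting in the first step is routine.
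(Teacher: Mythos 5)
Your argument is correct and follows essentially the same route as the paper's proof: first pin the four vertices to three consecutive levels, then derive contradictory constraints on the left-to-right order of the parts containing them (the paper does this by a case analysis on the position of the middle chain element relative to $w$, while you chain part-position inequalities through $w$ on one side and through the middle element on the other, which neatly absorbs the ``same part'' case). One small slip to fix: the negation of $w < x_3$ via the one-level condition is that the part of $w$ does not strictly precede the part of $x_3$ (equivalently, the part of $x_3$ is at or before that of $w$), not that ``the part of $x_3$ does not precede the part of $w$''; your subsequent chaining uses the correct version, so nothing downstream breaks.
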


\begin{proof}
  Consider a chain $x < y < z$ of three vertices in $P$, and suppose there is a vertex $w$ which is incomparable with $x$, $y$ and $z$. Vertices can only be incomparable if they are on the same or adjacent level, so $y$ and $w$ must be on the same level, with $x$ on the level below and $z$ on the level above. Given the relations in $P$ between these vertices, it can be seen that $y$ cannot appear before $w$ in $L$ because of $x$; that $y$ cannot appear after $w$ because of $z$; and that $y$ cannot be in the same part as $w$, as this part would also contain $x$ and $z$, and span more than two levels. Thus, $P$ cannot contain an induced copy of the $(3+1)$ poset.
\end{proof}

\begin{proposition}
  Given any $(3+1)$-free poset $P$, there exists a part listing $L$ for which the associated poset is $P$.
\end{proposition}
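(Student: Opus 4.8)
The plan is to build a part listing $L$ realizing $P$ in three stages: first assign levels, then group vertices into parts, then order the parts. For the first stage, I would set $\ell(x) = 1 + k$, where $k$ is the maximum number of vertices on a chain of $P$ lying strictly below $x$. A short check shows that every maximal such chain $c_1 < \dots < c_k < x$ satisfies $\ell(c_j) = j$, so the image of $\ell$ is an initial segment $\{1, \dots, m\}$ of $\PP$ and $x < y$ implies $\ell(x) < \ell(y)$. The place where $(3+1)$-freeness is needed is the complementary fact that incomparable vertices occupy equal or adjacent levels: if $x \parallel y$ with $\ell(x) \ge \ell(y) + 2$ (so $\ell(x) \ge 3$), a maximal chain $c_1 < \dots < c_{\ell(x)-1} < x$ below $x$ yields a $3$-chain $c_{\ell(x)-2} < c_{\ell(x)-1} < x$, and any comparability of $y$ with $c_{\ell(x)-2}$ or $c_{\ell(x)-1}$ would either put $y < x$ (contradicting $x \parallel y$) or give $\ell(y) \ge \ell(x) - 1$ (contradicting the assumption); so $y$ is incomparable to all three and $\{x, y, c_{\ell(x)-2}, c_{\ell(x)-1}\}$ is an induced $(3+1)$. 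In particular, $\ell(x) \le \ell(y) - 2$ now forces $x < y$.

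For the second stage I would record, on the vertex set of $P$, the directed graph $H$ of ``forced precedences'': whenever $\ell(v) = \ell(u) + 1$, draw the arc $u \to v$ if $u < v$ in $P$ and the arc $v \to u$ if $u \parallel v$. (The motivation is condition~\ref*{item:one-level}: if $u$ and $v$ land in different parts of $L$, then their order in $P$ is dictated by which of their parts comes first, exactly as $H$ records.) The crucial claim, which I expect to be the main obstacle, is that \emph{every strongly connected component of $H$ meets at most two consecutive levels}. The skeleton of the argument: if some component $C$ reached levels $M-2, M-1, M$ with $M$ maximal, then, since the only arcs into level $M$ come from level $M-1$, a cycle in $C$ through a level-$M$ vertex $q$ would contain a subpath $p \to q \to r$ with $\ell(p) = \ell(r) = M-1$; extending $p < q$ downward to a maximal chain $c_1 < \dots < c_{M-2} < p < q$ produces a $3$-chain $c_{M-2} < p < q$ to which $r$ is incomparable at $p$ (same level) and at $q$, so $(3+1)$-freeness forces $c_{M-2} < r$; following the cycle down from $r$ and iterating this one level at a time would eventually force some vertex of level $1$ to lie strictly below another, which is impossible. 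Making this rigorous requires dealing with cycles that do not descend monotonically, and I would handle that by induction on $m$: deleting the top or bottom level of $P$ changes neither $(3+1)$-freeness nor $H$ on the surviving vertices, which reduces to the case where the offending component spans every level.

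Granting the claim, the assembly is routine. Take the parts of $L$ to be the strongly connected components of $H$: a component lying within one level carries no arcs and is therefore a single vertex, giving a one-vertex part; a component meeting levels $i$ and $i+1$ becomes a bicoloured graph on those levels with edge set $\{\,uv : u < v \text{ in } P,\ \ell(u) = i,\ \ell(v) = i+1\,\}$. Order these parts by any topological order of the quotient of $H$ by its components, which is a directed acyclic graph. It remains to check that the poset associated with $L$ is $P$. For two vertices on the same level, or on levels differing by at least two, agreement follows at once from the properties of $\ell$ together with condition~\ref*{item:two-levels}. For $u$ on level $i$ and $v$ on level $i+1$: if they lie in the same part, condition~\ref*{item:graph} makes $u < v$ exactly when the bicoloured edge $uv$ is present, i.e.\ exactly when $u < v$ in $P$; and if they lie in different parts, condition~\ref*{item:one-level} makes $u < v$ iff $u$'s part precedes $v$'s, which by the construction of $H$ and the choice of order happens iff $u \to v$ is an arc of $H$, i.e.\ iff $u < v$ in $P$. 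Hence $L$ realizes $P$, as required.
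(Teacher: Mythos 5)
Your overall architecture is sound, and two of your three stages are complete: the level function $\ell$ and the proof that incomparable vertices lie on equal or adjacent levels are correct (this is the easy half of the structure theory), and the final verification that the SCC-based part listing realizes $P$ is also correct, \emph{granting} your ``crucial claim'' that every strongly connected component of $H$ meets at most two consecutive levels. But that claim is not a technical lemma to be dispatched in passing --- it is essentially the entire content of the structural characterization of $(3+1)$-free posets, which is exactly what the paper's one-line proof outsources to \cite[Theorem~3.3]{guay-paquet-morales-rowland13}. Your argument for it does not close. What you actually establish at the top level is: given a subpath $p \to q \to r$ with $p < q$ and $r \parallel q$, the penultimate element $c_{M-2}$ of one maximal chain under $p$ satisfies $c_{M-2} < r$. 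That single comparability is not in itself close to a contradiction ($r$, being at level $M-1$, already has plenty of level-$(M-2)$ elements below it), and it does not propagate: the cycle leaving $r$ may immediately climb back to level $M$, and the forbidden-$(3+1)$ argument needs a $3$-chain $x < y < q$ with \emph{both} $x$ and $y$ incomparable to the descending vertex, which a long up-and-down cycle need not supply (the cycle only tells you $r \not> e$ for the one level-$1$ vertex $e$ it happens to visit next, and the maximal chains under $q$ may avoid $e$ entirely). Your proposed fix --- induction on the number of levels by deleting the top or bottom level --- only normalizes the situation so that the offending component spans all levels; it does nothing to handle the non-monotone cycles, which is the difficulty you yourself flagged.

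So the gap is real and it sits exactly at the crux. To be clear about what is at stake: the claim is true (it follows from the cited structure theorem, since in any valid part listing all vertices of an $H$-cycle must occupy a single part, and parts span at most two levels --- though even this deduction needs the fact that the levels of a listing can be taken to be your canonical $\ell$), but proving it from scratch is the hard theorem of \cite{guay-paquet-morales-rowland13}, not a paragraph. Either cite that theorem, as the paper does, and then your SCC construction becomes an alternative (and rather clean) way to translate compatible listings into part listings; or commit to reproving the structure theorem, in which case the analysis of arbitrary closed walks in $H$ --- the ``tangle'' analysis --- is the bulk of the work and is currently missing.
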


\begin{proof}
  By~\cite[Theorem~3.3]{guay-paquet-morales-rowland13}, every $(3+1)$-free poset $P$ has a \emph{compatible listing} $L'$ made up of \emph{clone sets} and \emph{tangles}. A clone set $c_i$ with $k$ vertices at level $i$ in the compatible listing $L'$ corresponds to $k$ consecutive parts $v_i$ in the part listing $L$, and a tangle $t_{i,i+1}(G)$ in $L'$ corresponds to a bicoloured graph $b_{i,i+1}(G)$ in $L$.
\end{proof}

Note that two different part listings $L$ and $L'$ can give rise to the same poset, in which case we say that they are \emph{poset-equivalent}, and write $L \peq L'$. In particular, this happens when $L'$ is obtained from $L$ by applying a sequence of commutation, circulation and/or combination relations, as described below.

\subsection{Commutation relations}

If two consecutive parts of a part listing $L$ are at least two levels apart, then they can safely be swapped without interfering with the definition of the associated poset. That is, if $A$, $B$ are words over the alphabet $\Sigma$ and $i$, $j$ are levels with $j - i \geq 2$, then we have the relations
\begin{align*}
  A v_i v_j B &\peq A v_j v_i B, &
  A v_i b_{j,j+1} B &\peq A b_{j,j+1} v_i B, \\
  A b_{i,i+1} v_{j+1} B &\peq A v_{j+1} b_{i,i+1} B, &
  A b_{i,i+1} b_{j+1,j+2} B &\peq A b_{j+1,j+2} b_{i,i+1} B.
\end{align*}

\begin{example}
  For the part listing given in \autoref{fig:part-listing}, the second and third parts, corresponding to vertices $b$ and $c$, can be swapped without changing the associated poset, so $v_2 v_1 v_3 v_3 v_1 b_{12}(G) \peq v_2 v_3 v_1 v_3 v_1 b_{12}(G)$.
\end{example}

\subsection{Circulation relations}

Given a word $A$ over the alphabet $\Sigma$, let $A^+$ be the word obtained by raising each symbol by one level, that is, replacing each $v_i$ by $v_{i+1}$ and each $b_{i,i+1}(G)$ by $b_{i+1,i+2}$. Then, it can be checked that, for any two words $A$, $B$ over $\Sigma$, we have the poset-equivalence
\[
  A^+ B \peq B A.
\]
In particular, if a part listing $L$ starts with a part on level 2 or above, then this first part can be lowered by one level and moved to the end of $L$ without changing the associated poset.

\begin{example}
  For the part listing given in \autoref{fig:part-listing}, the first part, corresponding to vertex $a$, is on level 2, so it can be lowered to level 1 and moved to the end without changing the associated poset. Thus, $v_2 v_1 v_3 v_3 v_1 b_{12}(G) \peq v_1 v_3 v_3 v_1 b_{12}(G) v_1$.
\end{example}

\subsection{Combination relations}

If two or more consecutive parts in a part listing all lie on levels $i$ and $i+1$, then they can be replaced by a single equivalent bicoloured graph part, where the edges of the graph are given by the poset relations between the vertices involved. Conversely, it may be possible to decompose a single bicoloured graph part into a sequence of consecutive parts occupying the same two levels. If $B_{i,i+1}$ is a word over the alphabet
\[
  \{v_i, v_{i+1}\} \cup \{\,b_{i,i+1}(G) : \text{$G$ a bicoloured graph}\,\},
\]
let us write $\overline{B_{i,i+1}}$ for the equivalent bicoloured graph part. Then, for any two words $A$, $C$ over $\Sigma$, we have the poset-equivalence relation
\[
  A B_{i,i+1} C \peq A \overline{B_{i,i+1}} C.
\]

\begin{example}
  For the part listing given in \autoref{fig:part-listing}, the last two parts, corresponding to vertex $e$ on level 1 and the vertices $\{f, g, h, i\}$ on levels 1 and 2, can be combined without changing the associated poset. Thus, $v_2 v_1 v_3 v_3 v_1 b_{12}(G) \peq v_2 v_1 v_3 v_3 \overline{v_1 b_{12}(G)} = v_2 v_1 v_3 v_3 b_{12}(G')$, where $G'$ is the bicoloured graph with vertices $\{e, f, g\}$ coloured `down', vertices $\{h, i\}$ coloured `up', and edges $\{eh, ei, fh, gh, gi\}$.
\end{example}

Note that the \emph{tangles} defined in~\cite{guay-paquet-morales-rowland13} are exactly the bicoloured graphs which cannot be decomposed using combination relations.

\section{The modular law}\label{sec:modularity}

With the notation of part listings for $(3+1)$-free posets in place, we can now state the modular law for their chromatic symmetric functions.

\begin{proposition}[modular law]\label{prop:modular}
  Consider the part listing $A b_{i,i+1}(G) B$, where~$A$,~$B$ are words over $\Sigma$ and $G$ is a bicoloured graph. Suppose $G$ contains two edges~$e_1$,~$e_2$ incident to a common vertex $y$, so that $e_1 = xy$ and $e_2 = yz$ for some vertices~$x$,~$z$. Let $G_1$, $G_2$, and $G_{12}$ be the graphs obtained from $G$ by removing the edge $e_1$, the edge $e_2$, and both edges, respectively (but no vertices). Let $P$, $P_1$, $P_2$, and $P_{12}$ be the posets associated to the part listings $A b_{i,i+1}(G) B$, $A b_{i,i+1}(G_1) B$, $A b_{i,i+1}(G_2) B$, and $A b_{i,i+1}(G_{12}) B$, respectively. Then,
  \[
    \csf(P) + \csf(P_{12}) = \csf(P_1) + \csf(P_2).
  \]
\end{proposition}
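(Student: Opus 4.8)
The plan is to argue directly from the combinatorial definition $\csf(Q) = \sum_\kappa \mathbf{x}_\kappa$, where the sum ranges over proper colourings $\kappa \colon V \to \PP$ of the poset $Q$. Since $P$, $P_1$, $P_2$, $P_{12}$ arise from part listings that differ only in the edge set of one bicoloured graph part, they share a common vertex set $V$. The whole identity then follows term by term once we establish, for each \emph{fixed} colouring $\kappa \colon V \to \PP$, the indicator identity
\[
  [\kappa \text{ proper for } P] + [\kappa \text{ proper for } P_{12}]
    = [\kappa \text{ proper for } P_1] + [\kappa \text{ proper for } P_2],
\]
since then the coefficient of $\mathbf{x}_\kappa$ on the two sides of the claimed equation agree.

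Before fixing $\kappa$, I would record two structural observations. First, deleting a bicoloured graph edge changes only condition~\ref*{item:graph} in the construction of the associated poset, and only between the two endpoints of that edge (condition~\ref*{item:two-levels} and condition~\ref*{item:one-level} never reference graph edges, and the endpoints lie in the same part on adjacent levels, so condition~\ref*{item:one-level} cannot apply between them either). Hence $P$, $P_1$, $P_2$, $P_{12}$ agree on every comparability not involving $y$; the only pairs on which they can disagree are $\{x,y\}$ (comparable in $P, P_2$; incomparable in $P_1, P_{12}$) and $\{y,z\}$ (comparable in $P, P_1$; incomparable in $P_2, P_{12}$). Second, $x$ and $z$ are the two endpoints, other than $y$, of edges of a bicoloured graph incident to $y$, so they lie on the level opposite $y$, i.e.\ on the same level as each other; therefore they are incomparable in all four posets, and in any proper colouring they must receive different colours.

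Now fix $\kappa$, let $S = \kappa^{-1}(\kappa(y))$ be the colour class of $y$, and $S' = S \setminus \{y\}$. Each colour class other than $S$, as well as the set $S'$, avoids $y$, so by the first observation it is a chain in one of the four posets exactly when it is a chain in all of them. Consequently, if some colour class other than $S$ fails to be a chain, or if $S'$ fails to be a chain, then $\kappa$ is proper for none of the four posets and the indicator identity reads $0 = 0$. Otherwise, $\kappa$ is proper for $Q$ precisely when $y$ is comparable in $Q$ to every element of $S'$. Writing $T$ for the set of vertices comparable to $y$ in $P_{12}$ (so $x,z \notin T$), the set of vertices comparable to $y$ equals $T$, $T \cup \{z\}$, $T \cup \{x\}$, $T \cup \{x,z\}$ in $P_{12}$, $P_1$, $P_2$, $P$ respectively; hence with $R := S' \setminus T$ the colouring $\kappa$ is proper for $P_{12}$ iff $R = \emptyset$, for $P_1$ iff $R \subseteq \{z\}$, for $P_2$ iff $R \subseteq \{x\}$, and for $P$ iff $R \subseteq \{x,z\}$. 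By the second observation $S'$, hence $R$, cannot contain both $x$ and $z$, so $R$ is one of $\emptyset$, $\{x\}$, $\{z\}$, or a set not contained in $\{x,z\}$; a four-line check of these cases gives the indicator identity in each.

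The step I expect to look like the main obstacle — and the point of the second observation — is the case in which $x$, $y$, $z$ all receive the same colour. Naively that colouring would be proper for $P$ but for none of $P_1$, $P_2$, $P_{12}$, which would break the count ($1+0 \neq 0+0$). The resolution is that this case simply cannot occur: $x$ and $z$, lying on the same level, are never in a common chain, so $\{x,y,z\}$ is never contained in a colour class. Once this is noticed, the remainder is the short finite verification above, and summing over $\kappa$ completes the proof.
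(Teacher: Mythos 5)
Your proof is correct and follows essentially the same route as the paper: both reduce to a per-colouring check and hinge on the observation that $x$ and $z$, being on the same level, are incomparable and hence differently coloured, which rules out the one case ($\kappa(x)=\kappa(y)=\kappa(z)$) that would break the count. Your version merely spells out in more detail (via the sets $T$ and $R$) why properness transfers between the four posets, which the paper's case analysis leaves implicit.
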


\begin{proof}
  Since the chromatic symmetric function is a generating function for proper vertex colourings and all of these posets have the same vertex set, it is enough to verify that each vertex colouring $\kappa \colon V \to \PP$ (whether proper or not) makes the same contribution to both sides of the equation. For $\kappa$ to make any contribution, it must be proper for $P$. In particular, $\kappa(x) \neq \kappa(z)$, and $\kappa(y)$ may be equal to $\kappa(x)$, or $\kappa(z)$, or neither. If $\kappa(y) = \kappa(x)$, then $\kappa$ is proper for $P$ and $P_2$, but not for $P_1$ nor $P_{12}$. If $\kappa(y) = \kappa(z)$, then $\kappa$ is proper for $P$ and $P_1$, but not for $P_2$ nor $P_{12}$. If $\kappa(y)$ is distinct from $\kappa(x)$ and $\kappa(z)$, then $\kappa$ is proper for all of $P$, $P_1$, $P_2$ and $P_{12}$. In all cases, the contribution of $\kappa$ is the same to both sides of the equation.
\end{proof}

Note that the proof of the modular relation only depends on the fact that $x < y$ and $z < y$ (or, symmetrically, $y < x$ and $y < z$) are cover relations in the poset $P$. However, we state the modular law as above so that all the posets involved are manifestly $(3+1)$-free.

Since we are concerned with linear combinations of chromatic symmetric functions and linear relations between them, it will be useful to consider formal linear combinations of part listings, and to define the $\csf$ and poset-equivalence $\peq$ on these linear combinations by linear extension. Then, we define a modular-equivalence relation $\meq$ by imposing
\[
  L + L_{12} \meq L_1 + L_2,
\]
where $L$, $L_1$, $L_2$, and $L_{12}$ are the part listings considered in \autoref{prop:modular}, and extending $\meq$ linearly so that it is invariant under translation and scaling. Then, we have $\csf(\alpha) = \csf(\beta)$ whenever $\alpha \peq \beta$ or $\alpha \meq \beta$.

\section{Dual bases for bicoloured parts}\label{sec:bases}


Now, let us consider some consequences of the modular law for the computation of chromatic symmetric functions. In particular, let us fix a set of $r$ vertices coloured `down' and a set of $s$ vertices coloured `up' and look at the modular law when restricted to bicoloured graphs on these vertices. More formally, let $V_r^s$ be the vector space over $\QQ$ of formal linear combinations of the set
\[
  \{\,b_{12}(G) : \text{$G$ is a bicoloured graph with $r$ vertices below and $s$ vertices above}\,\},
\]
modulo the modular-equivalence relation $\meq$. Also, consider the vectors
\[
  U_k = \overline{v_2^{s-k} v_1^r v_2^k} \in V_r^s \qquad \text{for $k = 0, 1, 2, \ldots, s$,}
\]
which we call \emph{udu vectors} (for `up-down-up'), and the vectors
\[
  D_k = \overline{v_1^k v_2^s v_1^{r-k}} \in V_r^s \qquad \text{for $k = 0, 1, 2, \ldots, r$,}
\]
which we call \emph{dud vectors}. Finally, consider the linear functionals $F_k \colon V_r^s \to \QQ$ for $k = 0, 1, 2, \ldots, \min\{r, s\}$ defined as follows, which we call \emph{probability functionals}:
\begin{quote}
  Let $G$ be a bicoloured graph with $r$ vertices below and $s$ vertices above. Let $M$ be a random matching with $\min\{r, s\}$ edges from the complete bicoloured graph on the same vertex set, taken uniformly at random out of all $\max\{r, s\}! / \abs{r - s}!$ such matchings. Then, $F_k(b_{12}(G))$ is the probability that $G$ and $M$ have exactly $k$ edges in common.
\end{quote}
Note that this is well-defined, since the vectors $b_{12}(G)$ span $V_r^s$, and $F_k$ respects the modular-equivalence relation
\[
  b_{12}(G) + b_{12}(G_{12}) \meq b_{12}(G_1) + b_{12}(G_2).
\]
The following proposition summarizes some useful properties of the probability functionals, the udu vectors, and the dud vectors.

\begin{proposition}[dual bases]\label{prop:dual}
  \hspace*{0pt} 
  \begin{enumerate}[(i),nosep]
    \item
      The probability functionals are a basis for space of linear functionals on $V_r^s$.
    \item
      If $r \geq s$, then the udu vectors are a dual basis for the probability functionals, in the sense that $F_j(U_k) = 1$ is $j = k$ and $0$ otherwise.
    \item
      If $r \leq s$, then the dud vectors are a dual basis for the probability functionals, in the sense that $F_j(D_k) = 1$ is $j = k$ and $0$ otherwise.
    \item
      Every vector of the form $b_{12}(G)$ in $V_r^s$ is a convex combination of udu vectors or dud vectors.
  \end{enumerate}
\end{proposition}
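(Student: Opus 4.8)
The plan is to prove the four parts in a logical order, starting with a dimension count for part~(i), then establishing the duality relations in parts~(ii) and~(iii) by a direct probabilistic computation, and finally deriving part~(iv) from the duality.

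\medskip

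\noindent\textbf{Step 1: Dimension of $V_r^s$ and part~(i).}
First I would determine the dimension of $V_r^s$. The modular relation $b_{12}(G) + b_{12}(G_{12}) \meq b_{12}(G_1) + b_{12}(G_2)$, applied to any vertex $y$ incident to two edges $e_1, e_2$, lets us ``straighten'' bicoloured graphs: one can use it to express an arbitrary $b_{12}(G)$ as a linear combination of $b_{12}(H)$ where $H$ ranges over graphs that are matchings (partial matchings) between the $r$ lower and $s$ upper vertices. The number of such partial matchings, sorted by the number of edges $k$, is $\sum_{k=0}^{\min\{r,s\}} \binom{r}{k}\binom{s}{k} k!$, but I expect these are \emph{not} all linearly independent either. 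The cleaner route is to show directly that $\dim V_r^s = \min\{r,s\} + 1$, matching the number of probability functionals. The upper bound $\dim V_r^s \le \min\{r,s\}+1$ should follow from a more careful straightening argument: using the modular relation together with the symmetry of the setup (permuting lower vertices among themselves and upper vertices among themselves leaves $V_r^s$ invariant, since relabelling does not change the associated poset), every $b_{12}(G)$ should be expressible in terms of a single representative $b_{12}(M_k)$ for each possible edge-count $k$, where $M_k$ is a fixed matching with $k$ edges. Concretely, if $G$ has two edges sharing a lower vertex, the modular relation reduces it to graphs with fewer such ``branchings''; iterating, and using the symmetry to identify all matchings of a given size, collapses the spanning set to $\{b_{12}(M_0), \ldots, b_{12}(M_{\min\{r,s\}})\}$. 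The lower bound $\dim V_r^s \ge \min\{r,s\}+1$ then follows once we know the $\min\{r,s\}+1$ functionals $F_k$ are linearly independent, which will come out of Steps~2--3. Given $\dim V_r^s = \min\{r,s\}+1$, part~(i) is immediate: there are exactly $\min\{r,s\}+1$ functionals $F_k$, so it suffices to check they are linearly independent, equivalently that the matrix $(F_j(U_k))$ or $(F_j(D_k))$ is invertible, which is exactly what parts~(ii)--(iii) assert (it is the identity matrix).

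\medskip

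\noindent\textbf{Step 2: The duality $F_j(U_k) = \delta_{jk}$ for $r \ge s$ (part~(ii)).}
Here I would compute $F_j(U_k)$ directly from the definitions. The vector $U_k = \overline{v_2^{s-k} v_1^r v_2^k}$ is the bicoloured graph obtained by combining the parts $v_2^{s-k} v_1^r v_2^k$: by the rule for combination relations, its edges are exactly the poset cover relations among those $r+s$ vertices lying on adjacent levels. Tracing through the definition of the associated poset: the first $s-k$ upper vertices come before all $r$ lower vertices, so they are \emph{not} above them (by condition~\ref{item:one-level}, the lower vertex must come after); the last $k$ upper vertices come after all $r$ lower vertices, so each of those $k$ upper vertices covers all $r$ lower vertices. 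Hence $b_{12}$ of this word is the graph $G$ which is the complete bipartite graph $K_{r,k}$ between the $r$ lower vertices and a distinguished set of $k$ upper vertices (the other $s-k$ upper vertices being isolated). Now $F_j(U_k)$ is the probability that a uniformly random matching $M$ with $s$ edges (note $\min\{r,s\}=s$ since $r\ge s$) --- equivalently, a uniformly random injection from the $s$ upper vertices into the $r$ lower vertices --- shares exactly $j$ edges with $G$. But an edge of $M$ lies in $G$ if and only if its upper endpoint is one of the $k$ distinguished vertices (since $G$ contains \emph{all} edges at those vertices and \emph{no} edges at the others). Since $M$ matches every upper vertex, $M$ has exactly $k$ edges in $G$ with probability $1$, so $F_j(U_k) = 1$ if $j = k$ and $0$ otherwise. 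This is the key computation and I expect it to be short once the identification of $\overline{v_2^{s-k} v_1^r v_2^k}$ with $K_{r,k}$-plus-isolated-vertices is nailed down.

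\medskip

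\noindent\textbf{Step 3: The dual statement for $r \le s$ (part~(iii)) and part~(iv).}
Part~(iii) is entirely symmetric to part~(ii): swapping the roles of ``up'' and ``down'' (reflecting across the two levels) interchanges $r \leftrightarrow s$, turns $U_k$ into $D_k$, and carries $F_k$ to $F_k$ (the probability functional is symmetric under this reflection since a random matching on the complete bicoloured graph is). So the same computation as Step~2 applies with the roles of the levels exchanged, and I would just invoke that symmetry rather than redo the argument. For part~(iv): given any $b_{12}(G) \in V_r^s$, each value $F_k(b_{12}(G))$ is a probability, hence lies in $[0,1]$, and $\sum_{k} F_k(b_{12}(G)) = 1$ since the events ``$G$ and $M$ share exactly $k$ edges'' are disjoint and exhaustive over $k = 0, \ldots, \min\{r,s\}$. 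By part~(i), $b_{12}(G)$ is determined by the tuple $(F_0(b_{12}(G)), \ldots, F_{\min\{r,s\}}(b_{12}(G)))$; and by part~(ii) (if $r \ge s$) the udu vectors $U_0, \ldots, U_s$ have coordinate tuples equal to the standard basis vectors, so $b_{12}(G) = \sum_k F_k(b_{12}(G))\, U_k$ is a convex combination of the $U_k$. If $r \le s$, use the $D_k$ and part~(iii) instead.

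\medskip

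\noindent\textbf{Main obstacle.}
The delicate point is Step~1, the upper bound $\dim V_r^s \le \min\{r,s\}+1$: one must argue carefully that the modular relation, \emph{together with} the relabelling symmetry among lower vertices and among upper vertices (which is legitimate because it does not change the associated poset, hence is already built into $\peq$ and a fortiori into the definition of $V_r^s$ once we also quotient by $\peq$ --- I should double-check whether $V_r^s$ is defined modulo $\peq$ as well as $\meq$, or whether the symmetry must be invoked some other way), suffices to reduce every bicoloured graph to a combination of the $\min\{r,s\}+1$ matching representatives. The iteration needs a well-chosen monovariant (e.g.\ total number of edges, or number of vertices of degree $\ge 2$) to guarantee termination. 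Everything else is a direct unwinding of definitions.
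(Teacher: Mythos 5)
Your proposal is correct and follows essentially the same route as the paper: straightening via the modular law to the matchings $b_{12}(M_k)$ for the spanning/dimension count, the probabilistic computation $F_j(U_k)=\delta_{jk}$ (which the paper leaves as ``direct computation'' and you correctly carry out by identifying $U_k$ with $K_{r,k}$ plus isolated vertices), and deducing part~(iv) from the fact that the coordinates are probabilities of events partitioning the sample space. The only cosmetic difference is that the paper gets linear independence from the triangularity of the matrix $\bigl(F_j(b_{12}(M_k))\bigr)$ rather than from the identity matrix $\bigl(F_j(U_k)\bigr)$, and your worry about relabelling symmetry is resolved by the paper's implicit convention that bicoloured graph parts are taken up to isomorphism.
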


\begin{proof}
  \begin{enumerate}[(i),nosep]
    \item
      For $k = 0, 1, 2, \ldots, \min\{r, s\}$, let $M_k$ be the (unique, up to isomorphism) bicoloured matching with $r$ vertices below, $s$ vertices above, and $k$ edges. Consider the vector $b_{12}(G) \in V_r^s$ for an arbitrary $G$. If $G$ has a vertex of degree more than 1, let $e_1$ and $e_2$ be two edges incident to that vertex. Then, the modular relation gives
      \[
        b_{12}(G) \meq b_{12}(G_1) + b_{12}(G_2) - b_{12}(G_{12}),
      \]
      and by induction on the number of edges in $G$, it follows that $b_{12}(G)$ can be expressed as
      \[
        b_{12}(G) \meq \sum_{k=0}^{\min\{r, s\}} c_k b_{12}(M_k)
      \]
      for some coefficients $c_k$. Thus, the vectors $b_{12}(M_k)$ span $V_r^s$. Also, the probability $F_j(b_{12}(M_k))$ is nonzero if $j = k$, and zero if $j > k$, so it follows that the vectors $b_{12}(M_k)$ are linearly independent. Symmetrically, the linear functionals $F_j$ are linearly independent, and by a dimension argument, they form a basis for space of linear functionals on $V_r^s$.

    \item
      Direct computation.

    \item
      Direct computation.

    \item
      The numbers $F_k(b_{12}(G))$ give the coefficients of $b_{12}(G)$ in the basis of udu vectors or dud vectors, depending on whether $r \geq s$ or $r \leq s$. Since these numbers are the probabilities of a set of events which partition their sample space, they are nonnegative and their sum is 1.
      \qedhere
  \end{enumerate}
\end{proof}

\section{Consequences}\label{sec:proofs}

\begin{theorem}
  If every $(3+1)$- and $(2+2)$-free poset is $e$-positive, then every $(3+1)$-free poset is $e$-positive.
\end{theorem}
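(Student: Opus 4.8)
The plan is to fix a part listing for the given $(3+1)$-free poset $P$ and to induct on the number of bicoloured-graph parts it contains, using the modular law to eliminate one such part at a time while only ever passing to \emph{convex} combinations of chromatic symmetric functions. Since $e$-positive symmetric functions are closed under convex combinations, this reduces $\csf(P)$ to $\csf$'s of posets whose part listings have no bicoloured part, and those turn out to be unit interval orders, to which the hypothesis applies.

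Concretely, by the second proposition of \autoref{sec:posets} I would choose a part listing $L$ whose associated poset is $P$. In the base case $L$ has no bicoloured-graph part, i.e. $L$ is a word over $\{v_i : i \in \PP\}$, and I claim its associated poset is $(3+1)$- and $(2+2)$-free. The $(3+1)$-freeness is the first proposition of \autoref{sec:posets}. For $(2+2)$-freeness one checks directly: in a $v$-only listing two vertices are incomparable exactly when they lie on the same or on adjacent levels, so an induced $(2+2)$ with chains $a<b$ and $c<d$ would force $b$ one level above $a$, $d$ one level above $c$, and $\{a,b,c,d\}$ confined to two consecutive levels; the incomparabilities $a\parallel d$ and $b\parallel c$ together with the relations $a<b$ and $c<d$ then impose via condition~\ref*{item:one-level} the cyclic constraint ``$d$ before $a$ before $b$ before $c$ before $d$'' on positions in $L$, which is impossible. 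Hence in the base case $\csf(P)$ is $e$-positive by the standing hypothesis of the theorem.

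For the inductive step, write $L = A\,b_{i,i+1}(G)\,B$ with $G$ having $r$ vertices below and $s$ above. The modular law (\autoref{prop:modular}) holds for every level $i$, so the constructions of \autoref{sec:bases} apply with levels $1,2$ replaced by $i,i+1$; by \autoref{prop:dual}(iv) the class of $b_{i,i+1}(G)$ is, modulo $\meq$, a convex combination $\sum_j \mu_j W_j$ of udu or dud vectors, each $W_j$ being the bicoloured representative of a word over $\{v_i, v_{i+1}\}$. Because $\meq$ is compatible with concatenation on both sides and $\csf$ is constant on $\meq$-classes and linear on formal combinations, this gives $\csf(P) = \sum_j \mu_j \csf(P_j)$ with $\mu_j \geq 0$ and $\sum_j \mu_j = 1$, where $P_j$ is the poset associated to $A\,W_j\,B$; a combination relation unfolds $W_j$ back into a word over $\{v_i,v_{i+1}\}$, so $A\,W_j\,B$ is poset-equivalent to a part listing with strictly fewer bicoloured parts than $L$. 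By the inductive hypothesis each $\csf(P_j)$ is a convex combination of chromatic symmetric functions of $(3+1)$- and $(2+2)$-free posets, hence $e$-positive; a convex combination of $e$-positive symmetric functions is $e$-positive, so $\csf(P)$ is too.

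Almost all the work is already packaged in \autoref{prop:modular} and \autoref{prop:dual}, so the only genuinely new point to nail down is the base case: that part listings built solely from single-vertex parts land inside the class of $(3+1)$- and $(2+2)$-free (unit interval) posets, which is exactly what allows the hypothesis to be invoked at the bottom of the induction. The remaining care is purely bookkeeping — confirming that the convex combinations compose correctly and that ``number of bicoloured parts'' strictly decreases at each step, which it does since $W_j$ contributes none and the surrounding words $A$, $B$ are left untouched.
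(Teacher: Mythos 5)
Your proposal is correct and follows essentially the same route as the paper: replace each bicoloured part by a convex combination of udu/dud vectors via \autoref{prop:dual}, unfold those back into single-vertex parts, and invoke the hypothesis on the resulting $(3+1)$-and-$(2+2)$-free posets. The only difference is presentational — you organize the replacement as an explicit induction on the number of bicoloured parts and verify directly that $v$-only listings give $(2+2)$-free posets, where the paper cites the characterization from the compatible-listings reference.
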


\begin{proof}
  Every $(3+1)$-free poset can be represented as a part listing, possibly containing parts of the form $b_{i,i+1}(G)$. By \autoref{prop:dual}, each part of the form $b_{i,i+1}(G)$ can be replaced by a convex combination of udu vectors or dud vectors by using modular-equivalence without affecting the chromatic symmetric function. Furthermore, each udu vector or dud vector is poset-equivalent to a list of parts with no bicoloured graphs. By~\cite{guay-paquet-morales-rowland13}, the posets associated to part listings with no parts of the form $b_{i,i+1}(G)$ are exactly the $(3+1)$-and-$(2+2)$-free posets.
\end{proof}

\begin{remark}\label{rem:subclass}
  In fact, the question of $e$-positivity for $(3+1)$-free posets can be further reduced to a much smaller class of possible counter-examples. A first reduction can be obtained by throwing out not only part listings which contain parts of the form $b_{i,i+1}(G)$ other than udu vectors or dud vectors, but also those which contain parts of this form after applying the relations of poset-equivalence.

  A second reduction can be obtained by noting that, if the vertex set of a poset $P$ can be split into nonempty sets $X$ and $Y$ such that $x < y$ for all $x \in X$ and $y \in Y$, then $\csf(P)$ can be computed as the product of the chromatic symmetric functions of $P$ restricted to $X$ and $Y$. Thus, only posets which cannot be split in this way need to be checked.

  A third reduction can be obtained by using the fact that the udu vector $U_k \in V_r^s$ and the dud vector $D_k \in V_r^s$ are equal up to modular-equivalence when $r = s$, combined with the first or second reductions.

  A fourth reduction can be obtained that the chromatic symmetric function is invariant under reversing all the relations of a poset.

  After performing all these reductions, a comparatively tiny class is posets remains (see \autoref{tab:numbers}). In particular, there are only 62152 such posets on up to 20 vertices, and we have computationally checked that they are all $e$-positive.
\end{remark}

\begin{theorem}
  Every $3$-free poset is $e$-positive.
\end{theorem}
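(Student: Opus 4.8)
The plan is to combine \autoref{prop:dual} with a direct computation of the chromatic symmetric functions of the very restricted posets that arise. First, I would observe that a $3$-free poset $P$ spans at most two levels, so — placing the minimal-but-not-maximal elements on level~$1$, the maximal-but-not-minimal elements on level~$2$, the isolated elements on level~$1$, and recording every relation of $P$ as an edge of a bicoloured graph $G$ — it is the poset associated to the single-part listing $b_{1,2}(G)$ for some bicoloured graph $G$ with, say, $r$ vertices below and $s$ above. By \autoref{prop:dual}\,(iv), $b_{1,2}(G)$ is, modulo $\meq$, a convex combination of the udu vectors $U_0,\dots,U_s$ or of the dud vectors $D_0,\dots,D_r$ in $V_r^s$. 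Since $\csf$ respects $\meq$, this reduces the statement to showing that the posets associated to the part listings $U_k$ and $D_k$ have $e$-positive chromatic symmetric functions.

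Next I would identify those posets explicitly. Using a combination relation and then a circulation relation, the listing $U_k = \overline{v_2^{s-k} v_1^r v_2^k}$ is poset-equivalent to $v_1^r v_2^k v_1^{s-k}$, and reading off the definition of the associated poset one finds it is the disjoint union, with no relations between the two parts, of the poset $B_{r,k}$ on $r+k$ elements having $r$ minimal elements each below $k$ maximal elements, together with an antichain on $s-k$ elements; the listing $D_k$ gives the analogous poset built from $B_{k,s}$ and an antichain on $r-k$ elements. So everything comes down to proving: for all $r,k,a\ge 0$, the poset consisting of $B_{r,k}$ together with $a$ further pairwise-incomparable elements, each incomparable to everything, has $e$-positive $\csf$.

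To prove this I would introduce the linear operator $T$ on symmetric functions given by $T(f) = \sum_{i\ge 1} x_i\, f|_{x_i=0}$, and note that adjoining one globally-incomparable element to a poset $R$ replaces $\csf(R)$ by $T(\csf(R))$: in a proper colouring the new element takes some colour $i$, and then no element of $R$ may use colour $i$, as its colour class would fail to be a chain. Since the incomparability graph of $B_{r,k}$ is $K_r\sqcup K_k$ and $\csf(K_n)=n!\,e_n$, we get $\csf(B_{r,k})=r!\,k!\,e_r e_k$, and hence the poset above has $\csf$ equal to $r!\,k!\,T^a(e_r e_k)$. The crux is then the identity
\[
  T(e_p e_q) \;=\; \sum_{\ell=0}^{\min(p,q)} (p+q+1-2\ell)\; e_{p+q+1-\ell}\; e_\ell ,
\]
which I would obtain by writing $T(e_p e_q) = [u^p v^q]\,\dfrac{u E'(u) E(v) - v E'(v) E(u)}{u-v}$ for $E(u)=\prod_i(1+x_i u)=\sum_{n\ge0}e_n u^n$ — using the partial-fraction identity $\sum_i \frac{x_i}{(1+x_i u)(1+x_i v)} = \frac{1}{u-v}\!\left(u\frac{E'(u)}{E(u)}-v\frac{E'(v)}{E(v)}\right)$, multiplying by $E(u)E(v)$, and extracting the coefficient — and then observing that the numerator equals $\sum_{n,m}(n-m)e_n e_m\,u^n v^m$, which is antisymmetric in $u\leftrightarrow v$, so dividing by $u-v$ and reading off $[u^p v^q]$ gives the formula; the case $q=0$ is exactly Newton's identity in the form $T(e_N)=(N+1)e_{N+1}$. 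Every coefficient $p+q+1-2\ell$ with $\ell\le\min(p,q)$ is at least $|p-q|+1\ge 1$, so $T$ carries any nonnegative combination of products $e_p e_q$ of two elementary symmetric functions to another such combination; since $e_r e_k$ is already of this form, induction on $a$ shows $T^a(e_r e_k)$ is too, and in particular is $e$-positive.

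The main obstacle is the displayed identity for $T(e_p e_q)$: the reduction through \autoref{prop:dual} and the recognition of the udu/dud posets are routine given the machinery already in place, but controlling the effect of adjoining globally-incomparable elements requires this precise closed form, and it is essential that the coefficients stay positive. Indeed $T$ does \emph{not} preserve $e$-positivity in general — for example, $T$ applied to $\csf$ of a $3$-element chain has a negative $e_{2,2}$ coefficient, which it must, since a $3$-chain together with an isolated point is not $3$-free. Some care is also needed with the truncation when passing between the polynomial $\sum_{m=0}^{p} e_{p-m}u^m$ and the full series $E(u)$, which is why the argument is best organized around the antisymmetric numerator rather than around $E'(u)/E(u)$ directly.
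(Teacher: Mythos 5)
Your proof is correct, and after the shared first step (representing $P$ as $b_{12}(G)$ and invoking \autoref{prop:dual} to write it, modulo $\meq$, as a convex combination of udu or dud vectors) it takes a genuinely different route from the paper's. The paper never analyzes the udu posets individually: it peels off only the term $k = s$, whose listing $\overline{v_1^r v_2^s}$ gives $\csf = r!\,s!\,e_{r,s}$ directly, applies a circulation relation to each remaining term $\overline{v_2^{s-k} v_1^r v_2^k}$ to land in $V_{r+1}^{s-1}$, and then re-expands as a convex combination in that smaller space, iterating until every term has the form $\overline{v_1^{r+k} v_2^{s-k}}$; this stays entirely inside the part-listing machinery and needs no symmetric-function identity beyond $\csf(\overline{v_1^a v_2^b}) = a!\,b!\,e_{a,b}$, but it leaves the final coefficients $c'_k$ implicit. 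You instead identify each $U_k$ (resp.\ $D_k$) explicitly as a complete bipartite poset plus an antichain and compute its $\csf$ as $r!\,k!\,T^{s-k}(e_r e_k)$ via the operator $T(f) = \sum_i x_i f|_{x_i = 0}$ together with the closed form $T(e_p e_q) = \sum_{\ell} (p+q+1-2\ell)\,e_{p+q+1-\ell}\,e_\ell$. I verified that identity (e.g.\ $T(e_1^2) = 3e_3 + e_2 e_1 = p_1^3 - 2p_1p_2 + p_3$, as it should be), and your derivation via the antisymmetric numerator is sound, as is the observation that the coefficients $p+q+1-2\ell \geq \abs{p-q}+1 > 0$ make the class of nonnegative combinations of two-factor $e$-products closed under $T$. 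Your route costs an extra generating-function computation but yields explicit $e$-expansions for each udu poset and a reusable positivity statement for $T$; the paper's route is shorter and purely combinatorial. Both in fact prove the slightly stronger statement that $\csf(P)$ is a nonnegative combination of $e_\lambda$ with $\lambda$ having at most two parts. (One cosmetic quibble: $T(e_N) = (N+1)e_{N+1}$ is not usually called Newton's identity, though the formula itself is correct.)
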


\begin{proof}
  Let $P$ be a $3$-free poset, so that it can be represented by the part listing $b_{12}(G)$ for some bicoloured graph $G$ with $r$ vertices on level 1 and $s$ vertices on level 2. By turning $P$ upside down if necessary, we can assume that $r \geq s$. Then, up to modular-equivalence, we can express $b_{12}(G)$ as the convex combination
  \[
    b_{12}(G) \meq
      \sum_{k = 0}^s c_k \cdot \overline{v_2^{s-k} v_1^r v_2^k}.
  \]
  For the term $k = s$ appearing on the right-hand side, we can directly compute
  \[
    \csf(\overline{v_1^r v_2^s}) = r! \, s! \cdot e_{r, s}.
  \]
  For each remaining term, we can use the circulation relation to transform the leading $v_2$ into a trailing $v_1$, rewriting the term as
  \[
    \overline{v_2^{s-k} v_1^r v_2^k} \peq \overline{v_2^{s-k-1} v_1^r v_2^k v_1} \in V_{r+1}^{s-1}.
  \]
  By repeating this process of expressing as a convex combination, peeling off a term, and rewriting the remaining terms, we eventually obtain
  \[
    \csf(b_{12}(G))
      = \sum_{k = 0}^s c'_k \csf(\overline{v_1^{r+k} v_2^{s-k}})
      = \sum_{k = 0}^s c'_k (r+k)! \, (s-k)! \cdot e_{r+k,s-k},
  \]
  where the $c'_k$ are the coefficients of a convex combination.
\end{proof}

\pagebreak

\begin{example}
  We can compute
  \[
    \graphic{example-lhs}
      \meq \tfrac{5}{12} \graphic{example-rhs-042}
      + \tfrac{5}{12} \graphic{example-rhs-141}
      + \tfrac{2}{12} \graphic{example-rhs-24},
  \]
  then
  \begin{align*}
    \tfrac{5}{12} \graphic{example-rhs-141}
      + \tfrac{2}{12} \graphic{example-rhs-24}
      &\peq \tfrac{5}{12} \graphic{example-rhs-0411}
      + \tfrac{2}{12} \graphic{example-rhs-15} \\
      &\meq \tfrac{20}{60} \graphic{example-rhs-051}
      + \tfrac{15}{60} \graphic{example-rhs-15}, \\
  \end{align*}
  then
  \[
    \tfrac{15}{60} \graphic{example-rhs-15}
      \peq \tfrac{15}{60} \graphic{example-rhs-06},
  \]
  and we know that
  \begin{align*}
    \csf\graphic{example-rhs-042} &= 4! \, 2! \cdot e_{42}, \\
    \csf\graphic{example-rhs-051} &= 5! \, 1! \cdot e_{51}, \\
    \csf\graphic{example-rhs-06} &= 6! \cdot e_{6},
  \end{align*}
  so we have
  \[
    \csf\graphic{example-lhs} = 20 e_{42} + 40 e_{51} + 180 e_{6}.
  \]
\end{example}

\section{Acknowledgements}\label{sec:thanks}

This work grew out of a working session of the algebraic combinatorics group at LaCIM with active participation from Chris Berg, Alejandro Morales, Eric Rowland, Franco Saliola, and Luis Serrano. It was facilitated by computer exploration using various mathematical software packages, including Sage~\cite{sage}, its Sage-Combinat extensions~\cite{sage-combinat} and the nauty suite of programs~\cite{nauty}, and hardware provided by Franco Saliola and funded by the FRQNT through its ``Établissement de nouveaux chercheurs universitaires'' program. The author would also like to thank Philippe Nadeau for helpful discussions.

\bibliographystyle{plainurl}
\bibliography{references}


\end{document}